\begin{document}
\runauthor{Simanjuntak and Murdiansyah}
\begin{frontmatter}
\title{Metric Dimension of \\Amalgamation of Regular Graphs
}
\author{Rinovia Simanjuntak and Danang Tri Murdiansyah}

\address{Combinatorial Mathematics Research Group\\Faculty of Mathematics and Natural Sciences\\Institut Teknologi Bandung, Indonesia\\
{\small rino@math.itb.ac.id}}

\newtheorem{lemma}{Lemma}
\newtheorem{theorem}{Theorem}
\newtheorem{proposition}{Proposition}
\newtheorem{corollary}{Corollary}
\newtheorem{definition}{Definition}
\newtheorem{observation}{Observation}
\newtheorem{problem}{Open Problem}

\begin{abstract}
A set of vertices $S$ resolves a graph $G$ if every vertex is uniquely determined by its vector of distances to the vertices in $S$. The metric dimension of $G$ is the minimum cardinality of a resolving set of $G$.

Let $\{G_1, G_2, \ldots, G_n\}$ be a finite collection of graphs and each $G_i$ has a fixed vertex $v_{0_i}$ or a fixed edge $e_{0_i}$ called a terminal vertex or edge, respectively. The \emph{vertex-amalgamation} of $G_1, G_2, \ldots, G_n$, denoted by $Vertex-Amal\{G_i;v_{0_i}\}$, is formed by taking all the $G_i$'s and identifying their terminal vertices. Similarly, the \emph{edge-amalgamation} of $G_1, G_2, \ldots, G_n$, denoted by $Edge-Amal\{G_i;e_{0_i}\}$, is formed by taking all the $G_i$'s and identifying their terminal edges.

Here we study the metric dimensions of vertex-amalgamation and edge-amalgamation for finite collection of regular graphs: complete graphs and prisms.
\end{abstract}
\begin{keyword}
graph distance; resolving set; metric dimension; amalgamation; complete graphs; prisms
\end{keyword}
\end{frontmatter}

\section{Introduction}

In this paper we consider finite, simple, and connected graphs. The vertex and edge sets of a graph $G$ are denoted by $V(G)$ and
$E(G)$, respectively.

The \textit{distance} $d(u,v)$ between two vertices $u$ and $v$ in a connected graph $G$ is the length of a shortest $u-v$ path in
$G$. For an ordered set $S = \{v_{1}, v_{2}, \ldots, v_{k}\} \subseteq V(G)$, we refer to the $k$-vector $r(v|S)=(d(v,v_{1}), d(v,v_{2}), \ldots, d(v,v_{k}))$ as the {\em(metric) representation of $v$ with respect to $S$}. The set $S$ is called a \textit{resolving set} for $G$ if $r(u|S)= r(v|S)$ implies that $u = v$ for all $u,v \in G$. In a graph $G$, a resolving set with minimum cardinality is called a \textit{basis} for $G$. The \textit{metric dimension}, $dim(G)$, is the number of vertices in a basis for $G$.

The metric dimension problem was first introduced in 1975 by Slater \cite{Sla}, and independently by Harary and Melter \cite{HM76} in 1976; however the problem for hypercube was studied (and solved asymptotically) much earlier in 1963 by Erd\H{o}s and R\'{e}nyi \cite{ER63}. In general, it is difficult to obtain a basis and metric dimension for arbitrary graph. Garey and Johnson \cite{Garey}, and also Khuller \textit{et al}. \cite{Khu96}, showed that determining the metric dimension of an arbitrary graph is an NP-complete problem. The problem is still NP-complete even if we consider some specific families of graphs, such as bipartite graphs \cite{MARR08} or planar graphs \cite{DPSL12}. Thus research in this area are then constrained towards: characterizing graphs with particular metric dimensions, determining metric dimensions of particular graphs, and constructing algorithm that "best" approximate metric dimensions.

Until today, only graphs of order $n$ with metric dimension 1 (the paths), $n-3$, $n-2$, and $n-1$ (the complete graphs) have been characterized \cite{CEJO00,HMPSW10,JO}. On the other hand, researchers have determined metric dimensions for many particular classes of graphs. There are also some results of metric dimensions of graphs resulting from graph operations; for instance: Cartesian product graphs \cite{Melter1984,Khu96,CHMPPSW07}, join product graphs \cite{BCPZ03,CHMPPSW05}, strong product \cite{RKYS}, corona product graphs \cite{YKR10,IBR11}, lexicographic product graphs \cite{SRUABSB13}, hierarchical product graphs \cite{FW13}, line graphs \cite{KY12,FXW13}, and permutation graphs \cite{HKY}.

In this paper, we study metric dimension of graphs resulting from another type of graph operations, i.e., vertex-amalgamation and edge-amalgamation. Let $\{G_1, G_2, \ldots, G_n\}$ be a finite collection of graphs and each \emph{block} $G_i$ has a fixed vertex $v_{0_i}$ or a fixed edge $e_{0_i}$ called a \emph{terminal vertex} or \emph{edge}, respectively. The \emph{vertex-amalgamation} of $G_1, G_2, \ldots, G_n$, denoted by $Vertex-Amal\{G_i;v_{0_i}\}$, is formed by taking all the $G_i$'s and identifying their terminal vertices. Similarly, the \emph{edge-amalgamation} of $G_1, G_2, \ldots, G_n$, denoted by $Edge-Amal\{G_i;e_{0_i}\}$, is formed by taking all the $G_i$'s and identifying their terminal edges.

Previous study of amalgamation of graphs has been done for vertex-amalgamation of two arbitrary graphs \cite{PZ02}, vertex-amalgamation of cycles \cite{IBSS10a,IBSS10b}, and edge-amalgamation of cycles \cite{SABISU}. Poisson and Zhang studied vertex-amalgamation of two nontrivial connected graphs $G_1, G_2$ and provide a lower bound as follow.

\begin{theorem} \emph{\cite{PZ02}}
Let $G$ be the vertex-amalgamation of nontrivial connected graphs $G_1$ and $G_2$. Then
\[dim(G) \geq dim(G_1) + dim(G_2) - 2.\]
\label{2}
\end{theorem}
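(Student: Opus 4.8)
The plan is to exploit the fact that the identified terminal vertex, call it $v_0$, is a cut vertex of $G$ separating the two blocks $G_1$ and $G_2$. The key structural facts I would record first are the distance decompositions: for any $u,u' \in V(G_1)$ a shortest $u$--$u'$ path need never leave $G_1$, so $d_G(u,u') = d_{G_1}(u,u')$ (and symmetrically inside $G_2$); while for $u \in V(G_1)$ and $w \in V(G_2)$ every $u$--$w$ path must run through $v_0$, giving $d_G(u,w) = d_{G_1}(u,v_0) + d_{G_2}(v_0,w)$. These are standard consequences of $v_0$ being a cut vertex and supply the only leverage the argument needs.

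Next I would take a basis $W$ of $G$ with $|W| = dim(G)$ and form the two augmented restrictions $W_1 = (W \cap V(G_1)) \cup \{v_0\}$ and $W_2 = (W \cap V(G_2)) \cup \{v_0\}$. The central claim is that $W_1$ resolves $G_1$ (and, symmetrically, $W_2$ resolves $G_2$). To prove it, fix distinct $u,u' \in V(G_1)$; since $W$ resolves $G$, some $x \in W$ satisfies $d_G(u,x) \neq d_G(u',x)$. If $x \in V(G_1)$, then the first decomposition turns this into $d_{G_1}(u,x) \neq d_{G_1}(u',x)$ with $x \in W_1$, so $x$ separates the pair in $G_1$. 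If instead $x \in V(G_2) \setminus \{v_0\}$, the second decomposition cancels the common term $d_{G_2}(v_0,x)$ and forces $d_{G_1}(u,v_0) \neq d_{G_1}(u',v_0)$, so the coordinate $v_0 \in W_1$ already separates $u$ and $u'$. In either case $W_1$ resolves the pair, so $W_1$ is a resolving set of $G_1$ and $|W_1| \geq dim(G_1)$; the same reasoning gives $|W_2| \geq dim(G_2)$.

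Finally I would count. Writing $A = W \cap (V(G_1)\setminus\{v_0\})$ and $B = W \cap (V(G_2)\setminus\{v_0\})$, the two bounds just obtained read $|A| \geq dim(G_1) - 1$ and $|B| \geq dim(G_2) - 1$, while $|W| \geq |A| + |B|$ because $A$ and $B$ are disjoint subsets of $W$. Combining these yields $dim(G) = |W| \geq dim(G_1) + dim(G_2) - 2$, as required. The one point requiring care — and the place where the argument could silently go wrong — is the bookkeeping around $v_0$: it must be inserted into each restriction to guarantee resolution when the separating coordinate sits in the opposite block, yet it must not be counted when bounding $|W|$ from below, which is precisely why the strict-interior sets $A$ and $B$, rather than $W_1$ and $W_2$, are used in the final tally.
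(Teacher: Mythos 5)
Your argument is correct and complete: the cut-vertex distance decomposition, the claim that $W_i=(W\cap V(G_i))\cup\{v_0\}$ resolves $G_i$, and the final count over the disjoint sets $A$ and $B$ all hold up, including the boundary cases where one of the compared vertices is $v_0$ or where the separating landmark is $v_0$ itself. Note that the paper does not prove this statement at all --- it quotes Theorem~\ref{2} from the cited work of Poisson and Zhang --- so there is no in-paper proof to compare against; your proof is the standard projection argument for that result and can stand on its own.
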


Other known results are vertex-amalgamation and edge-amalgamation of cycles. We denote by $C_n$ the cycle of order $n$.

\begin{theorem} \emph{\cite{IBSS10b,SABISU}}
Let $\{C_{c_1}, C_{c_2}, \ldots, C_{c_n}\}$ be a collection of $n$ cycles with $n_e$ cycles of even order. Suppose that $G$ is the vertex-amalgamation of $C_{c_1}, C_{c_2}, \ldots, C_{c_n}$ and $H$ is the edge-amalgamation of $C_{c_1}, C_{c_2}, \ldots, C_{c_n}$. Then
\[dim(G)=\left\{
\begin{array}{ll}
\sum_{i=1}^{n} dim(C_{c_i}) - n & , n_e=0,\\
\sum_{i=1}^{n} dim(C_{c_i}) - n + n_e - 1& , n_e \geq 1
\end{array}
\right.\]
and
\[\sum_{i=1}^{n} dim(C_{c_i}) - n - 2 \leq dim(H) \leq \sum_{i=1}^{n} dim(C_{c_i}) - n.\]
\label{Cn}
\end{theorem}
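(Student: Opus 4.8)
The plan is to treat the two amalgamations separately, using throughout the classical value $dim(C_m)=2$ for every $m\ge 3$, so that the asserted quantities reduce to $n$, $n+n_e-1$, and the interval $[n-2,n]$. The structural observation driving everything on the vertex side is that in a vertex-amalgamation $G$ every shortest path joining two distinct blocks passes through the common vertex $v_0$; consequently, for a vertex $u$ in block $i$ and any landmark $s$ in a different block, $d(u,s)=d(u,v_0)+d(v_0,s)$, so such an $s$ records only the single number $d(u,v_0)$ about $u$. Labelling block $i$ cyclically as $v_0=w_{i,0},w_{i,1},\dots,w_{i,c_i-1}$, the reflection $w_{i,t}\mapsto w_{i,c_i-t}$ fixes $v_0$, preserves every distance to $v_0$, and hence produces pairs of vertices indistinguishable by $v_0$ and by every landmark outside block $i$.

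For the vertex-amalgamation I would first prove the lower bound. Since a reflection pair inside a block can only be separated by a landmark of that block distinct from $v_0$ (the common vertex, being a fixed point of every reflection, never separates such a pair), each block must contain its own landmark, giving $dim(G)\ge n$. For an even block the reflection has a second fixed point, the antipode $\overline{v_0}$, and a single landmark $s_i$ placed anywhere other than $v_0$ and $\overline{v_0}$ (placement at $\overline{v_0}$ again fails to separate the reflection pairs) leaves the neighbour of $v_0$ on the arc opposite $s_i$ ``behind'' $v_0$, i.e.\ at distance exactly $1+d(v_0,s_i)$ from $s_i$. I would then check that any two even blocks carrying a single landmark each contain such a distance-$1$, behind-$v_0$ vertex and that these receive identical representations, since every coordinate equals $1+d(v_0,s_\ell)$. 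Hence at most one even block may carry only one landmark and the remaining $n_e-1$ even blocks need a second, yielding $dim(G)\ge n+n_e-1$ when $n_e\ge 1$.

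For the matching upper bound I would exhibit an explicit basis: in the $n_e=0$ case, one landmark per block placed as far from $v_0$ as possible so that no vertex is left behind $v_0$ (precisely the step that is impossible in an even cycle), and, when $n_e\ge1$, a second landmark in all but one even block chosen so that its otherwise unavoidable behind-$v_0$ vertex acquires a coordinate strictly smaller than $1+d(v_0,\cdot)$, separating it from the analogous vertices of the other even blocks. Verifying that these sets resolve \emph{all} pairs, not merely the critical behind-$v_0$ vertices, is the main bookkeeping burden: one compares two vertices by cases according to whether they lie in the same block or different blocks, reducing the latter to a comparison of their distances to $v_0$ furnished by a landmark in a third block (with the few small collections handled directly).

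The edge-amalgamation $H$ is a theta-like graph in which all blocks share the edge $e_0=v_0v_1$, so shortest paths between blocks may now pass through \emph{either} portal and $d(u,s)=\min\{d(u,v_0)+d(v_0,s),\,d(u,v_1)+d(v_1,s)\}$ for $u,s$ in different blocks. For the upper bound $dim(H)\le n$ I would again place one landmark in each block and verify resolvability through this two-portal formula. The lower bound $dim(H)\ge n-2$ is the genuinely hard part: here the within-block reflection interchanges $v_0$ and $v_1$, so external landmarks are no longer blind to a block's internal reflection pairs and the clean ``one landmark per block'' necessity breaks down. The plan is to argue that the pair of shared portals can simultaneously break the internal symmetry of at most two blocks, so that at least $n-2$ blocks still demand a dedicated landmark; controlling the cross-block interference routed through the two portals is exactly what prevents sharpening this estimate to an equality, which is why only bounds are claimed.
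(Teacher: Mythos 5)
A preliminary remark: the paper states this theorem as a known result quoted from \cite{IBSS10b} and \cite{SABISU} and gives no proof of it, so there is no in-paper argument to compare yours against; I can only assess your proposal on its own terms. Your treatment of the vertex-amalgamation $G$ is sound and is essentially the standard argument for this result: the identity $d(u,s)=d(u,v_0)+d(v_0,s)$ for landmarks outside the block of $u$, the reflection pairs (e.g.\ the two neighbours of $v_0$ in each block) forcing one non-terminal landmark per block, the ``behind-$v_0$'' neighbour of $v_0$ in any even block carrying a single landmark, and the collision of two such vertices across distinct even blocks yielding the extra $n_e-1$. The upper-bound constructions you sketch (farthest vertices in odd blocks, two near-antipodal vertices in all but one even block) do work, and the deferred case-checking is genuinely routine.

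The genuine gap is in the lower bound $dim(H)\geq n-2$ for the edge-amalgamation. Your proposed mechanism --- that the two portals ``can simultaneously break the internal symmetry of at most two blocks,'' so that at least $n-2$ blocks still need a dedicated landmark --- is not the right obstruction, and as stated it is false: a single landmark $s$ with $d(v_0,s)\neq d(v_1,s)$ already separates \emph{every} internal reflection pair of \emph{every} landmark-free block at once (writing $a=d(v_0,s)$, $b=d(v_1,s)$, the distances $\min\{t+a,\,(t+1)+b\}$ and $\min\{(t+1)+a,\,t+b\}$ differ whenever $a\neq b$), so counting blocks whose internal symmetry survives cannot produce the bound $n-2$. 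The obstruction that actually works is a cross-block collision, in the same spirit as your even-block argument for $G$: if two blocks of order at least $4$ both contain no landmark, then their respective neighbours $x$ and $y$ of $v_0$ satisfy $d(x,v_0)=1$ and $d(x,v_1)=2$, hence $d(x,s)=\min\{1+a,\,2+b\}=1+a=d(y,s)$ for every landmark $s$ (using $|a-b|\leq 1$), so $r(x|R)=r(y|R)$; likewise the interior vertices of two landmark-free triangles collide, since both have representation $1+\min\{a,b\}$ in every coordinate. Hence at most one block of order at least $4$ and at most one triangle can avoid containing a landmark, which yields $|R|\geq n-2$ and also explains why the constant is $2$ rather than $1$. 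You should replace the symmetry-counting step by this collision argument; the rest of your edge-amalgamation plan (the two-portal distance formula and the upper bound $dim(H)\leq n$ via one landmark per block) is fine.
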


The previous theorem provided the metric dimensions of vertex and edge amalgamation of connected 2-regular graphs. In the next section, we shall consider metric dimensions of vertex-amalgamation and edge-amalgamation of other connected regular graphs: complete graphs and prisms.

\section{Main Results}

\subsection{Metric Dimension of Amalgamation of Complete Graphs}

Two vertices $u$ and $v$ of a graph $G$ is defined in \cite{SZ03} to be \emph{distance similar} if $d(u,x) = d(v,x)$ for all $x \in V(G) - \{u,v\}$. Certainly, distance similarity is an equivalence relation in $V(G)$. The following observation is useful.

\begin{observation} \emph{\cite{SZ03}}
Let $G$ be a graph and let $V_1, V_2, \ldots , V_k$ be the $k$ distinct distance-similar equivalence classes of $V(G)$. If $W$ is a resolving set of $G$, then $W$ contains at least $|Vi|-1$ vertices from each equivalence class $V_i$ for all $i$ and so $dim(G) \geq |V(G)|-k$.
\label{ds}
\end{observation}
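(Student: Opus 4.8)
The plan is to prove the per-class bound directly and then sum. The heart of the argument is a single observation about pairs of distance-similar vertices: if $u$ and $v$ are two distinct vertices lying in the same equivalence class $V_i$, then $d(u,x)=d(v,x)$ for every $x\in V(G)\setminus\{u,v\}$. This follows because distance similarity is an equivalence relation, so any two members of the same class $V_i$ are distance similar to each other, and the displayed identity is exactly the definition of that relation.

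Next I would argue by contradiction on a fixed class $V_i$. Suppose $W$ contained at most $|V_i|-2$ vertices of $V_i$; then at least two distinct vertices $u,v\in V_i$ would satisfy $u,v\notin W$. Since $W$ is then a subset of $V(G)\setminus\{u,v\}$, every vertex $w\in W$ is an eligible choice of $x$ in the identity above, so $d(u,w)=d(v,w)$ for all $w\in W$. Coordinate by coordinate this yields $r(u|W)=r(v|W)$ while $u\neq v$, contradicting the assumption that $W$ resolves $G$. Hence every resolving set $W$ must omit at most one vertex from each class, that is, $|W\cap V_i|\geq |V_i|-1$.

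Finally I would sum over the $k$ classes. Because the classes $V_1,\dots,V_k$ partition $V(G)$, summing the per-class bounds gives
\[
|W|=\sum_{i=1}^{k}|W\cap V_i|\geq\sum_{i=1}^{k}\bigl(|V_i|-1\bigr)=|V(G)|-k.
\]
As this holds for an arbitrary resolving set, and in particular for a basis, we conclude $dim(G)\geq |V(G)|-k$.

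The argument is short, and the only point needing care is the bookkeeping in the contradiction step: one must ensure that both omitted vertices $u$ and $v$ lie outside $W$ simultaneously, so that the restriction $x\neq u,v$ in the definition of distance similarity never excludes any element of $W$. This is precisely why the per-class slack is $|V_i|-1$ and not $|V_i|$ — a single representative of each class may safely be left out of $W$, but no two can be. No deeper obstacle arises, since the transitivity of distance similarity (which guarantees that all pairs within a class are distance similar) has already been granted in the text.
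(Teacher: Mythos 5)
Your proof is correct: the observation is stated in the paper without proof (it is quoted from the cited reference), and your argument --- that two unchosen vertices from the same distance-similar class would receive identical representations, forcing $|W\cap V_i|\geq |V_i|-1$, followed by summing over the partition --- is exactly the standard justification. Nothing is missing; in particular you correctly handle the subtlety that the defining identity $d(u,x)=d(v,x)$ only applies for $x\notin\{u,v\}$, which is precisely why both $u$ and $v$ must lie outside $W$ for the contradiction to go through.
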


Let $K_k$ be a complete graph of order $k$. It is obvious that $K_k$ is a $(k-1)$-regular graph. Consider vertex-amalgamation of $\{K_{k_1}, K_{k_2}, \ldots, K_{k_n}\}$ a collection of $n$ complete graphs, where $k_i$ is of an increasing order. We denote by $v_1^i, v_2^i, \ldots, v_{k_i}^i$ the vertices in the block ${K_{k_i}}$, $c=v_{k_i}^i$ the terminal vertex, and ${K_{k_i-1}}$ the subgraph obtained by deleting $c$ from the block ${K_{k_i}}$.

\begin{theorem} Let $\{K_{k_1}, K_{k_2}, \ldots, K_{k_n}\}$ be a collection of $n$ complete graphs with $n_2$ complete graphs of order $2$. If $G$ is the vertex-amalgamation of $K_{k_1}, \ldots, K_{k_n}$ then
\[dim(G)=\left\{
\begin{array}{ll}
\sum_{i=1}^{n} dim(K_{k_i}) - n + n_2 - 1 & , n_2 \geq 2,\\
\sum_{i=1}^{n} dim(K_{k_i}) - n & , \hbox{otherwise}
\end{array}
\right.\]
\label{vKn}
\end{theorem}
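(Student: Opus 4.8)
The plan is to compute $\dim(G)$ by analyzing the distance-similar equivalence classes of the vertex-amalgamation $G$ and then applying Observation~\ref{ds}, supplemented by a direct argument that the resulting lower bound is achievable. First I would recall that in a single complete graph $K_{k_i}$, all vertices are mutually adjacent, so $\dim(K_{k_i}) = k_i - 1$, and indeed $\sum_{i=1}^n \dim(K_{k_i}) = \sum_{i=1}^n (k_i - 1)$. The key structural observation is that in $G$ the terminal vertex $c$ is adjacent to every vertex of every block, and so every vertex of $G$ lies at distance $1$ from $c$ except $c$ itself; meanwhile, for two vertices $u, w$ in the same block $K_{k_i - 1}$ (i.e. non-terminal vertices of the same $K_{k_i}$), we have $d(u,x) = d(w,x)$ for every $x$ outside that block, since any shortest path to another block must pass through $c$ and both $u,w$ are at distance $1$ from $c$. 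Hence the non-terminal vertices of each block $K_{k_i-1}$ are candidates to form distance-similar classes, and the terminal vertex $c$ forms its own class.

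The heart of the matter is determining exactly how the classes merge across blocks. Within a fixed block the $k_i - 1$ non-terminal vertices are pairwise distance-similar. The subtle point is cross-block similarity: a vertex $u$ in block $i$ and a vertex $w$ in block $j$ (with $i \neq j$) are distance-similar precisely when they cannot be distinguished by any third vertex. For blocks of order at least $3$, a vertex $u$ in $K_{k_i-1}$ is distinguished from a vertex $w$ in $K_{k_j-1}$ by another non-terminal vertex of block $i$ (distance $1$ from $u$ but distance $2$ from $w$), so these stay in separate classes. However, when a block is a $K_2$, its single non-terminal vertex is a pendant-like vertex attached only to $c$; two such pendant vertices coming from two different $K_2$ blocks are distance-similar to each other, because each sees every other vertex only through $c$ at the same distances. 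This is why the count of order-$2$ blocks, $n_2$, enters the formula: the $n_2$ pendant vertices all collapse into a single distance-similar class, whereas for $n_2 \le 1$ no such merging occurs. I expect carefully verifying this cross-block (non-)similarity, and in particular pinning down the behavior of the $K_2$ blocks against the $c$-class, to be the main obstacle.

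With the equivalence classes identified, I would count $k$, the number of distinct classes, in the two regimes. When $n_2 \le 1$, each block $K_{k_i-1}$ contributes one class and $c$ contributes one more, giving $k = n + 1$ classes over $|V(G)| = 1 + \sum_{i=1}^n (k_i - 1)$ vertices, so Observation~\ref{ds} yields $\dim(G) \ge |V(G)| - k = \sum_{i=1}^n (k_i-1) - n = \sum \dim(K_{k_i}) - n$. When $n_2 \ge 2$, the $n_2$ singleton pendant classes fuse into one, reducing the class count by $n_2 - 1$, which raises the lower bound by exactly $n_2 - 1$, matching $\sum \dim(K_{k_i}) - n + n_2 - 1$.

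Finally I would establish the matching upper bound by exhibiting an explicit resolving set $W$ of the stated cardinality, namely take all but one vertex from each non-terminal block (and all but one of the fused pendant vertices in the $n_2 \ge 2$ case). The verification that $W$ resolves $G$ is routine once the class structure is understood: any two vertices left unchosen lie in distinct classes or are separated by a chosen neighbor, so their distance vectors to $W$ differ. Combining the lower bound from Observation~\ref{ds} with this construction gives equality and completes the proof.
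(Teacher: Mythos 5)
Your proposal is correct and follows essentially the same route as the paper: identify the distance-similar equivalence classes (with the $n_2$ pendant vertices from the $K_2$ blocks fusing into one class when $n_2\geq 2$), apply Observation~\ref{ds} for the lower bound, and exhibit the resolving set consisting of all but one vertex from each class for the matching upper bound. The class structure and the final counting agree with the paper's proof.
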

\begin{proof} For $n_2 \geq 2$, let $V_c = \{c\}, V_{0} = \{v_1^1, v_1^2, \ldots , v_1^{k_{n_2}}\}$ and $V_i = V({K_{k_{n_2+i}-1}}), i = 1, \ldots, n-n_2$. Clearly, $V_c, V_0, V_1, V_2, \ldots, V_{n-n_2}$ are distance-similar equivalence classes of $V(G)$. By Observation \ref{ds}, a resolving set of $G$ contains at least $|V_i|-1$ vertices from each equivalence class $V_i$ and so $dim(G) \geq 0 + (n_2 - 1) + \sum_{i=1}^{n-n_2} ((n_2+i-1) - 1) = \sum_{i=1}^{n} dim(K_{k_i}) - n + n_2 - 1$. Consider $S = \{v_1^i |i = 1, \ldots, n_2-1\} \cup \{v_1^i, v_2^i, \ldots ,v_{k_i-2}^i|i = n_2+1, \ldots, n\}$. Thus $r(c|S) = (1, \ldots , 1), r(v_1^{n_2}|S) = (2, \ldots ,2)$, and the coordinates in $r(v_{k_i-1}^i|S)$ are 1 for those correspond with vertices in block $K_{k_i}$ and 2 otherwise. This results in $S$ being a resolving set and $dim(G) \leq \sum_{i=1}^{n} dim(K_{k_i}) - n + n_2 - 1$.

For $n_2 = 1$, let $V_c = \{c\}, V_0= v_1^1,$ and $V_i = V({K_{k_{n_2+i}-1}}), i = 1, \ldots, n-n_2$, then we have $V_c, V_0, V_1, \ldots, V_{n-n_2}$ to be distance-similar equivalence classes of $V(G)$. By Observation \ref{ds}, a resolving set of $G$ contains at least $(k_{n_2+i}-1)-1=k_{n_2+i}-2$ vertices from each $V_i$; thus $dim(G) \geq \sum_{i=1}^{n} dim(K_{k_i}) - n$. Choose $S = \{v_1^i, v_2^i, \ldots ,v_{k_i-2}^i|i = n_2+1, \ldots, n\}$, then $r(c|S) = (1, \ldots , 1), r(v_1^1|S)= (2, \ldots ,2)$ and the coordinates in $r(v_{k_i-1}^i|S)$ are 1 for those correspond with vertices in block $K_{k_i}$ and 2 otherwise. Therefore $S$ resolves $G$ and so $dim(G) \leq \sum_{i=1}^{n} dim(K_{k_i}) - n$.

For $n_2 = 0$, let $V_c = \{c\}$ and $V_i = V({K_{k_{n_2+i}-1}}), i = 1, \ldots, n$ and so $V_c, V_1, \ldots, V_{n}$ are distance-similar equivalence classes of $V(G)$. Thus, $dim(G) \leq \sum_{i=1}^{n} k_{n_2+i}-2$. Now define $S = \{v_1^i, v_2^i, \ldots ,v_{k_i-2}^i|i = 1, \ldots, n\}$, then $r(c|S) = (1, \ldots , 1)$ and the coordinates in $r(v_{k_i-1}^i|S)$ are 1 for those correspond with vertices in block $K_{k_i}$ and 2 otherwise. Therefore $S$ resolves $G$ which leads to $dim(G) \leq \sum_{i=1}^{n} dim(K_{k_i}) - n$ and this completes the proof.
\end{proof}

Consider edge-amalgamation of $\{K_{k_1}, K_{k_2}, \ldots, K_{k_n}\}$ a collection of $n$ complete graphs, where $k_i$ is of an increasing order. We denote by $v_1^i, v_2^i, \ldots, v_{k_i-2}^i$ the vertices in the block ${K_{k_i}}$, $c_1c_2=v_{k_i-1}^iv_{k_i}^i$ the terminal edge, and ${K_{k_i-2}}$ the subgraph obtained by deleting $c_1c_2$ from the block ${K_{k_i}}$.

\begin{theorem} Let $\{K_{k_1}, K_{k_2}, \ldots, K_{k_n}\}$ be a collection of $n$ complete graphs with $n_3$ complete graphs of order $3$. If $G$ is the edge-amalgamation of $K_{k_1}, \ldots, K_{k_n}$ then
\[dim(G)=\left\{
\begin{array}{ll}
\sum_{i=1}^{n} dim(K_{k_i}) - 2n + 1, &n_3=0,\\
\sum_{i=1}^{n} dim(K_{k_i}) - 2n + 2, &n_3=1 \hbox{ and } n=2,\\
\sum_{i=1}^{n} dim(K_{k_i}) - 2n + n_3, &\hbox{otherwise}.
\end{array}
\right.\]
\label{eKn}
\end{theorem}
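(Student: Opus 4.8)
The plan is to mirror the structure of the proof of Theorem~\ref{vKn}, setting up distance-similar equivalence classes, applying Observation~\ref{ds} for the lower bound, and exhibiting an explicit resolving set for the matching upper bound; the case split is driven by how the terminal edge $c_1c_2$ and the small blocks $K_3$ (whose deletion of the terminal edge leaves a single vertex) collapse into distance-similar classes. First I would fix notation following the paragraph preceding the statement: write $c_1=v_{k_i-1}^i$ and $c_2=v_{k_i}^i$ for the shared terminal edge, and note that in the amalgamation every vertex of a block $K_{k_i}$ is at distance $1$ from both $c_1$ and $c_2$ and at distance $2$ from every non-terminal vertex in any other block. The key structural observation is that the non-terminal vertices $V(K_{k_i-2})$ of each block form a distance-similar class, and that vertices in different $K_3$ blocks (each contributing exactly one non-terminal vertex) all have identical distance vectors to the rest of the graph, so they merge into one large distance-similar class.

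Next I would compute the lower bound via Observation~\ref{ds}. In the generic case I expect the classes to be $V_{c_1}=\{c_1\}$, $V_{c_2}=\{c_2\}$, the merged class of the $n_3$ singleton non-terminal vertices coming from the $K_3$ blocks, and one class $V(K_{k_i-2})$ for each block of order at least $4$. Summing $|V_i|-1$ over these classes, the two terminal vertices contribute $0$ each, the merged $K_3$ class contributes $n_3-1$, and each larger block contributes $(k_i-2)-1$. Reconciling this with $\sum_i dim(K_{k_i})-2n$ via $dim(K_{k_i})=k_i-1$ should yield the advertised $\sum_i dim(K_{k_i})-2n+n_3$ in the generic regime, with the special cases $n_3=0$ (no merged class, so an extra $+1$ appears because both $c_1,c_2$ cannot simultaneously be left out of a resolving set once no small block forces them apart) and the degenerate $n_3=1,\ n=2$ configuration needing a separate count.

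For the upper bound I would, in each case, take $S$ to consist of all but one non-terminal vertex from each block together with whatever terminal vertices the lower bound permits us to drop, and then verify that $S$ resolves $G$ by checking that the finitely many unresolved vertices---the terminals $c_1,c_2$, the single omitted non-terminal vertex per block, and the $K_3$ representatives---receive pairwise distinct representations. As in Theorem~\ref{vKn}, a vertex in the omitted position of block $K_{k_i}$ is recognized because its coordinates are $1$ exactly on the chosen vertices of its own block and $2$ elsewhere, while $c_1,c_2$ are all-ones vectors distinguished only by whether we retained one of them in $S$; this is precisely why the boundary cases $n_3\le 1$ behave differently and force the $+1$ or $+2$ corrections.

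The main obstacle will be the careful bookkeeping of the degenerate small cases, especially $n_3=1$ with $n=2$: when there is exactly one triangle and only two blocks in total, the terminal edge and the lone $K_3$ non-terminal vertex interact so that neither $c_1$ nor $c_2$ can be distinguished from the $K_3$ representative without an extra vertex, which is what produces the $-2n+2$ value rather than $-2n+1$. I would handle this by an explicit finite check of representations for that small configuration, and similarly verify directly that for $n_3=0$ one of $c_1,c_2$ must enter any resolving set (giving the $+1$), before confirming that for all remaining configurations the generic count $\sum_i dim(K_{k_i})-2n+n_3$ is both necessary and achievable.
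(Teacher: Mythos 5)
Your overall strategy is the same as the paper's---distance-similar classes, Observation~\ref{ds} for the lower bound, an explicit set $S$ for the upper bound---but there is a concrete error in your identification of the equivalence classes that breaks the bookkeeping in every case. You take $V_{c_1}=\{c_1\}$ and $V_{c_2}=\{c_2\}$ as two singleton classes contributing $0$ each. In fact $c_1$ and $c_2$ are adjacent to every other vertex of the amalgamation (each is an endpoint of the terminal edge of every block $K_{k_i}$), so $d(c_1,x)=d(c_2,x)=1$ for all $x\notin\{c_1,c_2\}$ and they form a single distance-similar class $\{c_1,c_2\}$ of size $2$, contributing $1$ to the Observation~\ref{ds} bound. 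With your classes the generic count comes out to $\sum_i dim(K_{k_i})-2n+n_3-1$, one short of the stated formula; you assert the reconciliation ``should yield'' the advertised value without noticing this deficit. (Your own remark in the $n_3=0$ discussion---that $c_1$ and $c_2$ cannot both be omitted from a resolving set---is precisely the statement that $\{c_1,c_2\}$ is one class; it contradicts the class structure you use in the generic case.) The same point affects your upper bound: since $\{c_1,c_2\}$ is a class of size $2$, every resolving set must contain one of the two terminal vertices, so ``whatever terminal vertices the lower bound permits us to drop'' is exactly one of them, never both; the paper accordingly places $c_1$ in $S$ in every case.

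Two smaller points. First, for $n_3=1$, $n=2$, Observation~\ref{ds} (even with the corrected classes $\{v_1^1\}$, $V(K_{k_2-2})$, $\{c_1,c_2\}$) only gives $dim(G)\ge k_2-2$, while the true value is $k_2-1$; the paper closes this gap with a separate contradiction argument showing that a resolving set of size $k_2-2$ must omit some $a\in V(K_{k_2-2})$ and some $b\in\{c_1,c_2\}$, which then receive the identical all-ones representation. Your ``explicit finite check'' would have to contain this argument, so you have correctly flagged where extra work is needed, but be aware it is a genuine additional step beyond Observation~\ref{ds}, not just careful counting. Second, the case $n_3=n$ (all blocks are triangles) also needs to be covered; it fits the generic formula but the class structure degenerates to just $\{c_1,c_2\}$ and the merged singleton class, so it is worth treating explicitly as the paper does.
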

\begin{proof}
For $n_3=0$, let $V_0 = \{c_1, c_2\}$ and $V_i = V({K_{k_i-2}}), i=1,2, \ldots, n$. $V_0, V_1, \ldots , V_n$  are distance similar equivalence classes of $V(G)$. By Observation \ref{ds}, a resolving set of $G$ contains at least $|V_i|-1$ vertices from each equivalence class $V_i$ and so $dim(G) \geq \sum_{i=1}^{n} dim(K_{k_i}) - 2n + 1$. Define a set $S = \{c_1\} \cup \{v_1^i, v_2^i, \ldots, v_{k_i-3}^i| i=1,2, \ldots, n\}$, then $r(c_2|S) = (1, \ldots , 1)$ and the coordinates in $r(v_{k_i-2}^{i}|S)$ are 1 for those correspond with $c_1$ and vertices in block $K_{k_i}$ and 2 otherwise. Thus $S$ resolves $G$ and $dim(G) \leq \sum_{i=1}^{n} dim(K_{k_i}) - 2n + 1$.

For $n_3=1$ and $n=2$, we have $\{v_1^1\}$, $K_{k_2-2}$, and $\{c_1,c_2\}$ as distance similar equivalence classes of $V(G)$. By Observation \ref{ds}, a resolving set of $G$ contains at least $|K_{k_2-2}|-1$ vertices of $K_{k_2-2}$ and 1 vertices of $\{c_1,c_2\}$ or $dim(G) \geq (k_2-3) + 1 = k_2-2$. Assume $R$ is a resolving set with cardinality $k_2-2$, thus there exist $a \in K_{k_2-2}$ and $b\in \{c_1,c_2\}$ which are not contained in $R$. In this case $r(a|R) = (1,1, \ldots , 1) = r(b|R)$, a contradiction. Therefore $dim(G) \geq k_2-1$. Let $S= \{v_1^1\} \cup \{v_1^2, v_2^2, \ldots , v_{k_2-3}^{2}\} \cup \{c_1\}$. Thus we have $r(c_2|S) = (1, \ldots , 1), r(v_{k_2-2}^{2}|S) = (2, 1, 1, \ldots , 1)$ and so $dim(G) \leq 1 + (k_2-3) + 1= k_2-1$. Therefore $dim(G) = k_2-1 = \sum_{i=1}^{n} dim(K_{k_i}) - 2n + 2$.

For $n_3=n$, the sets $\{c_1,c_2\}$ and $\{v_1^1, v_1^2, \ldots , v_1^{n_3}\}$ are distance similar equivalence classes of $V(G)$. By applying Observation \ref{ds}, we have $dim(G) \geq 1 + (n_3-1) = n_3$. Let $S=\{c_1\} \cup \{v_1^1, v_1^2, \ldots, v_1^{n_3-1}\}$, and so $r(c_2|S) = (1, 1, \ldots , 1)$, and $r(v_1^{n_2}|S) = (1, 2, 2, \ldots , 2)$. Thus $dim(G) \leq n_3$ and we have $dim(G) = n_3= \sum_{i=1}^{n} dim(K_{k_i}) - 2n + n_3$.

For the rest of the cases, let $V_c = \{c_1, c_2\}, V_0 = \{v_1^1, v_1^2, \ldots , v_1^{n_3}\}, V_i = V(K_{k_{n_3+i}-2}),$ $i=1,2, \ldots , n-n_3$. We can see that $V_c, V_0, V_1 \ldots, V_{n-n_3}$ are distance similar equivalence classes of $V(G)$. By using Observation \ref{ds}, $dim(G) \geq 1 + (n_3-1) + \sum_{i=n_3+1}^n((k_i - 2) - 1) = \sum_{i=n_3+1}^n(k_i - 3) + n_3$. Choose $S=\{c_1\} \cup V_0 \cup
\{v_1^{n_3+i}, v_2^{n_3+i}, \ldots , v_{k_{n_3+i}-3}^{n_3+i}| i=1,2, \ldots , n-n_3\}$. Then we have $r(c_2|S) = (1, \ldots ,1), r(v_1^{n_3}|S) = (1, 2, \ldots , 2)$ and and the coordinates in $r(v_{k_{n_3+i}-2}^{n_3+i}|S)$ are 1 for those correspond with $c_1$ and vertices in block $K_{n_3+i}$ and 2 otherwise. Therefore $dim(G) \leq \sum_{i=n_3+1}^{n}(k_i - 3) + n_3$ and, consequently, $dim(G) = \sum_{i=n_3+1}^{n}(k_i - 3) + n_3 = \sum_{i=1}^{n} dim(K_{k_i}) - 2n + n_3$.
\end{proof}

\subsection{Metric Dimension of Amalgamation of Prisms}

For $n\geq 3$, \emph{a prism $Pr_n= C_n \times P_2$} is a 3-regular graphs of order $2n$. Let $V(Pr_n)=\{u_1, \ldots, u_n, v_1, \ldots, v_n\}$ and  $E(Pr_n)=\{u_i v_i, i=1, \ldots, n\} \cup \{u_i u_{i+1}|i=1,\ldots , n\} \cup \{u_n u_1\} \cup \{v_i v_{i+1}|i=1,\ldots , n\} \cup \{v_nv_1\}$. Consider vertex-amalgamation of $\{Pr_{p_1}, Pr_{p_2}, \ldots, Pr_{p_n}\}$ a collection of $n$ prisms. We denote by $u_1^i, \ldots, u_{p_i}^i, v_1^i, \ldots, v_{p_i}^i$ the vertices in the block ${Pr_{p_i}}$, $c=v_1^i$ the terminal vertex, and ${Pr_{p_i-1}}$ the subgraph obtained by deleting $c$ from the block ${Pr_{p_i}}$.

The following observations are needed in determining the metric dimension of vertex-amalgamation of prisms.
\begin{observation}
If $R$ is a resolving set of $Vertex-Amal\{Pr_{p_i};v_1^i\}$ then $|Pr_{p_i} \cap R| \geq 1$ for all $i$.
\label{vaPr>1}
\end{observation}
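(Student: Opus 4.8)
The plan is to argue by contradiction, exploiting that the terminal vertex $c$ is a cut vertex of the amalgamation. Suppose $R$ is a resolving set of $Vertex\text{-}Amal\{Pr_{p_i};v_1^i\}$ but $|Pr_{p_j} \cap R| = 0$ for some block $Pr_{p_j}$. Since $c = v_1^j$ belongs to every block, this immediately forces $c \notin R$; consequently $R$ is contained entirely in the union of the \emph{other} blocks, with no vertex of $Pr_{p_j}$ (including $c$) in $R$.

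Next I would use the cut-vertex structure to decompose distances. Because the blocks intersect only in $c$, every path from a vertex $w \in V(Pr_{p_j}) \setminus \{c\}$ to any $s \in R$ must pass through $c$, so $d(w,s) = d(w,c) + d(c,s)$ for all $s \in R$. Hence, whenever two vertices $w, w' \in V(Pr_{p_j}) \setminus \{c\}$ satisfy $d(w,c) = d(w',c)$, every coordinate of their representations agrees and $r(w|R) = r(w'|R)$.

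The last step is to exhibit two distinct vertices of $Pr_{p_j}$, both different from $c$, that are equidistant from $c$. In the prism the terminal vertex $c = v_1^j$ is adjacent to $u_1^j$, $v_2^j$, and $v_{p_j}^j$; since $p_j \geq 3$, the vertices $v_2^j$ and $v_{p_j}^j$ are distinct and both lie at distance $1$ from $c$. By the preceding paragraph they share the same representation with respect to $R$, contradicting the assumption that $R$ resolves the graph. Therefore no block can avoid $R$, i.e. $|Pr_{p_i} \cap R| \geq 1$ for every $i$.

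The argument is short, and I expect no serious obstacle: the only points requiring care are the cut-vertex distance decomposition $d(w,s) = d(w,c)+d(c,s)$, which relies on the blocks meeting only at $c$, and the verification that the two chosen neighbors of $c$ are genuinely distinct, which is exactly where the hypothesis $p_j \geq 3$ is used.
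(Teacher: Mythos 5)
Your argument is correct and matches the paper's proof, which likewise assumes some block $Pr_{p_j}$ misses $R$ and observes that then $r(v_2^j|R) = r(v_{p_j}^j|R)$; you have simply made explicit the cut-vertex distance decomposition that the paper leaves implicit. No issues.
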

\begin{proof}
Suppose that there exists $j$ such that $Pr_{p_j} \cap R = \emptyset$ then $r(v_{2}^{j}|R) = r(v_{p_j}^{j}|R)$, a contradiction.
\end{proof}

\begin{observation}
Let $R$ be a resolving set of $Vertex-Amal\{Pr_{p_i};v_1^i\}$. If $p_i$ is even then $|Pr_{p_i} \cap R| \geq 2$.
\label{vaPre}
\end{observation}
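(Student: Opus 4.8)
The plan is to argue by contradiction. Suppose some block $Pr_{p_j}$ with $p_j$ even satisfies $|Pr_{p_j}\cap R|\le 1$; by Observation \ref{vaPr>1} the intersection is then exactly one vertex, say $r$. Since $c=v_1^j$ is a cut vertex, every path from an interior vertex $x\in V(Pr_{p_j})$ to a vertex $w$ outside the block passes through $c$, so $d(x,w)=d(x,c)+d(c,w)$. Hence two vertices $x,x'\in V(Pr_{p_j})$ get the same representation with respect to $R$ exactly when $d(x,c)=d(x',c)$ and $d(x,r)=d(x',r)$; equivalently, $R$ can separate the interior of the block only if the pair $\{c,r\}$ already resolves $Pr_{p_j}$. (The distance-to-$c$ information is genuinely available because, by Observation \ref{vaPr>1}, some other block contributes a vertex to $R$.) So it suffices to exhibit, for every choice of $r$, two distinct vertices of $Pr_{p_j}$ with equal distance to $c$ and equal distance to $r$.

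To organize the search I would use the reflection automorphism $\sigma$ of the prism that fixes $c$: on cycle positions it acts by $a\mapsto 2-a \pmod{p_j}$ and preserves the level ($u$ or $v$). Writing prism distances as $d\big((a,\ell),(b,m)\big)=\delta(a,b)+|\ell-m|$, with $\delta$ the cyclic distance, one checks that $\sigma$ is an isometry fixing exactly the four vertices in positions $1$ and $1+p_j/2$; here evenness of $p_j$ is essential, since it is what makes the reflection through position $1$ fix a second, diametrically opposite position. Because $\sigma$ fixes $c$ and is distance preserving, a bad pair for $r$ maps to a bad pair for $\sigma(r)$, so after possibly applying $\sigma$ I may assume the cycle position $s$ of $r$ satisfies $2\le s\le p_j/2$, unless $r$ lies on the axis.

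I would then split into three cases. If $r$ lies on the axis (position $1$ or $1+p_j/2$), the reflected pair $\{v_2^j,v_{p_j}^j\}$ works: both are at distance $1$ from $c$, and $\delta(2,s)=\delta(p_j,s)$ since $s$ is axis-fixed, so both are equidistant from $r$. If $r=v_s^j$ is an off-axis $v$-vertex with $2\le s\le p_j/2$, I would take $\{v_{s+1}^j,u_s^j\}$, for which a short computation gives $d(v_{s+1}^j,c)=s=d(u_s^j,c)$ and $d(v_{s+1}^j,r)=1=d(u_s^j,r)$. If $r=u_s^j$ is an off-axis $u$-vertex with $2\le s\le p_j/2$, I would instead take $\{v_2^j,u_1^j\}$, both at distance $1$ from $c$, and verify $d(v_2^j,r)=\delta(2,s)+1=\delta(1,s)=d(u_1^j,r)$. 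In each case the two vertices are distinct and different from $r$, so by the reduction they share a representation with respect to $R$, contradicting that $R$ resolves $G$.

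The routine part is the distance bookkeeping inside each case (recording the ranges of $s$ on which $\delta(1,s)=s-1$, $\delta(2,s)=s-2$, etc., so that the claimed equalities hold). The genuine obstacle is the off-axis cases: once $r$ leaves the axis the reflection no longer produces a same-level collision, so the argument must switch to cross-level pairs pitting a $v$-vertex against a $u$-vertex, and the content of the proof is discovering those pairs. Conceptually this shows that moving $r$ off the axis merely trades the reflected collision for a cross-level one, so no single $r$ ever separates the block; this is precisely the fact that $\dim(Pr_{p_j})=3$ for even $p_j$, specialized to resolving sets containing $c$.
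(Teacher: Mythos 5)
Your proposal is correct and follows essentially the same route as the paper: assume a single vertex $x$ of the block lies in $R$, split into cases according to whether $x$ is on the reflection axis through $c$ or off it, and exhibit an explicit pair of vertices (a reflected same-level pair in the axis case, a cross-level pair otherwise) equidistant from both $c$ and $x$, contradicting resolvability. The only differences are cosmetic — you halve the case analysis via the reflection automorphism and pick slightly different witness pairs than the paper's $\{u_2^i,u_{p_i}^i\}$, $\{u_1^i,v_{p_i}^i\}$, $\{u_1^i,v_2^i\}$.
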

\begin{proof}
By Observation \ref{vaPr>1}, $|Pr_{p_i} \cap R| \geq 1$. Suppose that $|Pr_{p_i} \cap R| = 1$ and $x$ is the vertex in $Pr_{p_i} \cap R$. If $x\in \{u_1^i, u_{\frac{p_i}{2}+1}^i, v_{\frac{p_i}{2}+1}^i\}$ then $r(u_{2}^{i}|R) = r(u_{p_i}^{i}|R)$. If $x\in \{u_{\frac{p_i}{2}+2}^i, \ldots,  u_{p_i}^i, v_2, \ldots, v_{\frac{p_i}{2}}^i\}$ then $r(u_{1}^{i}|R) = r(v_{p_i}^{i}|R)$. If $x\in \{u_2, \ldots, u_{\frac{p_i}{2}}^i, v_{\frac{p_i}{2}+2}^i, \ldots,  v_{p_i}^i\}$ then $r(u_{1}^{i}|R) = r(v_{2}^{i}|R)$. All possible cases lead to contradiction and so $|Pr_{p_i} \cap R| \geq 2$.
\end{proof}

\begin{observation}
Let $R$ be a resolving set of $Vertex-Amal\{Pr_{p_i};v_1^i\}$. If $p_{i}$ and $p_{j}$ are both odd then $|(Pr_{p_{i}} \cup Pr_{p_{j}}) \cap R| \geq 3$.
\label{vaPro}
\end{observation}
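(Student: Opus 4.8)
The plan is to argue by contradiction, assuming $|(Pr_{p_i} \cup Pr_{p_j}) \cap R| \leq 2$. By Observation \ref{vaPr>1} both blocks meet $R$, and since blocks $i$ and $j$ share only the terminal vertex $c = v_1^i$, a budget of two vertices leaves two possibilities: either $c \in R$ and $c$ is the only vertex of $R$ in one of the two blocks, or $c \notin R$ and each of the two blocks contains exactly one vertex of $R$. In the first case that block is reached by $R$ only through $c$, so the two cycle-neighbours $v_2^i$ and $v_{p_i}^i$ of $c$, both at distance $1$ from $c$, receive equal representations, a contradiction. Thus I may assume $c \notin R$ and set $\{x\} = Pr_{p_i} \cap R$ and $\{y\} = Pr_{p_j} \cap R$, with $x$ and $y$ interior to their blocks.

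The heart of the argument is a local statement about the three neighbours of the terminal vertex. Any vertex of $R$ lying outside block $i$ reaches block $i$ only through $c$, and therefore assigns equal coordinates to vertices of block $i$ that are equidistant from $c$. Now $c = v_1^i$ has exactly three neighbours in $Pr_{p_i}$, namely $v_2^i$, $v_{p_i}^i$ and $u_1^i$, all at distance $1$ from $c$; hence these three can be separated only by $x$, and $R$ resolves $G$ only if $d(v_2^i,x)$, $d(v_{p_i}^i,x)$ and $d(u_1^i,x)$ are pairwise distinct. The main obstacle is to prove the following characterisation: in an odd prism $Pr_{p_i}$, the only vertices separating these three neighbours of $c$ are the two antipodal vertices $v_{(p_i+1)/2}^i$ and $v_{(p_i+3)/2}^i$. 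I would establish this by a direct distance computation. The reflection of $Pr_{p_i}$ fixing $c$ and $u_1^i$ kills the on-axis candidate $u_1^i$ (it cannot separate the mirror pair $v_2^i, v_{p_i}^i$) and lets me assume $x = v_t^i$ or $x = u_t^i$ with $2 \leq t \leq (p_i+1)/2$. Using the cycle-distance formula one finds that for $x = u_t^i$ the pair $v_2^i, u_1^i$ always collides, and for $x = v_t^i$ with $t < (p_i+1)/2$ the pair $v_{p_i}^i, u_1^i$ collides, whereas $t = (p_i+1)/2$ gives the three consecutive values $(p_i-3)/2, (p_i-1)/2, (p_i+1)/2$ and hence separates them. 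Applying this in both blocks forces $x \in \{v_{(p_i+1)/2}^i, v_{(p_i+3)/2}^i\}$ and $y \in \{v_{(p_j+1)/2}^j, v_{(p_j+3)/2}^j\}$.

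It then remains to exhibit a pair that $R$ fails to resolve, and the natural candidate is $u_1^i$ together with $u_1^j$. Both are at distance $1$ from $c$, so every vertex of $R$ outside blocks $i$ and $j$ gives them equal coordinates. Since $x$ is an antipodal $v$-vertex of block $i$, a short computation gives $d(u_1^i,x) = (p_i+1)/2$, while $u_1^j$ reaches $x$ through $c$, so $d(u_1^j,x) = 1 + d(c,x) = 1 + (p_i-1)/2 = (p_i+1)/2$; the two agree, and symmetrically $d(u_1^i,y) = (p_j+1)/2 = d(u_1^j,y)$. Therefore $r(u_1^i \mid R) = r(u_1^j \mid R)$, contradicting that $R$ resolves $G$. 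This forces $|(Pr_{p_i} \cup Pr_{p_j}) \cap R| \geq 3$, as claimed. The only genuinely delicate step is the characterisation in the middle paragraph; the opening reduction and the closing computation are routine distance bookkeeping through $c$.
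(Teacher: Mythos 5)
Your proof is correct, and while it follows the same overall skeleton as the paper's argument --- reduce to a single vertex $x$ of $R$ inside block $i$ with $c\notin R$, then case-analyse the position of $x$ --- it is genuinely stronger, because it repairs a real gap in the published proof. The paper claims that \emph{every} position of $x$ forces a collision between two vertices of block $i$ alone; in particular it puts $v_{(p_i+1)/2}^i$ into the case yielding $r(u_1^i|R)=r(v_{p_i}^i|R)$ and $v_{(p_i+3)/2}^i$ into the case yielding $r(u_1^i|R)=r(v_2^i|R)$. Both claims fail: as your computation shows, $x=v_{(p_i+1)/2}^i$ assigns the three neighbours $v_2^i,v_{p_i}^i,u_1^i$ of $c$ the pairwise distinct distances $(p_i-3)/2,(p_i-1)/2,(p_i+1)/2$, and symmetrically for $v_{(p_i+3)/2}^i$. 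No single-block contradiction can exist at these two positions, since the paper's own resolving set in Theorem~\ref{vPrn} places exactly one vertex, $v_{\lceil p_{i_o}/2\rceil}^{i_o}$, in one odd block; this is precisely why the observation must be stated for \emph{two} odd blocks. Your closing step --- that once $x$ and $y$ are forced to be antipodal $v$-vertices of their blocks, the pair $u_1^i,u_1^j$ collides because $d(u_1^i,x)=(p_i+1)/2=1+d(c,x)=d(u_1^j,x)$ and symmetrically for $y$ --- is exactly the missing cross-block argument, and it is correct. In short, your proposal is not an alternative route so much as the completed version of the proof the paper only sketches; the only part that deserves to be written out in full detail is the distance table behind your characterisation of the separating vertices, and that table checks out.
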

\begin{proof}
By Observation \ref{vaPr>1}, $|Pr_{p_i} \cup Pr_{p_{j}} \cap R| \geq 2$. Suppose that $|Pr_{p_i} \cup Pr_{p_{j}} \cap R| = 2$ and $x$ is the vertex in $Pr_{p_i} \cap R$. If $x = u_1^i$ then $r(u_{2}^{i}|R) = r(u_{p_i}^{i}|R)$. If $x\in \{u_{\frac{p_i+3}{2}}^i, \ldots,  u_{p_i}^i, v_2, \ldots, v_{\frac{p_i+1}{2}}^i\}$ then $r(u_{1}^{i}|R) = r(v_{p_i}^{i}|R)$. If $x\in \{u_2, \ldots, u_{\frac{p_i+1}{2}}^i,$ $v_{\frac{p_i+3}{2}}^i, \ldots,  v_{p_i}^i\}$ then $r(u_{1}^{i}|R) = r(v_{2}^{i}|R)$. Thus we have $|(Pr_{p_{i}} \cup Pr_{p_{j}}) \cap R| \geq 3$.
\end{proof}

Now we are ready to prove the following.
\begin{theorem} Let $\{Pr_{p_1}, Pr_{p_2}, \ldots, Pr_{p_n}\}$ be a collection of $n$ prisms with $n_o$ prisms of odd order. If $G$ is the vertex-amalgamation of $Pr_{p_1}, \ldots, Pr_{p_n}$ then
\[dim(G)=\left\{
\begin{array}{ll}
\sum_{i=1}^{n} dim(Pr_{p_i}) - n & , n_o=0,\\
\sum_{i=1}^{n} dim(Pr_{p_i}) - n + n_0 - 1& , n_o \geq 1
\end{array}
\right.\]
\label{vPrn}
\end{theorem}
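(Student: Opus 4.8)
The plan is to reduce everything to two numbers. It is well known that $dim(Pr_p)=2$ for odd $p$ and $dim(Pr_p)=3$ for even $p$, so $\sum_{i=1}^{n} dim(Pr_{p_i})=3n-n_o$ and the claim is equivalent to $dim(G)=2n$ when $n_o=0$ and $dim(G)=2n-1$ when $n_o\ge 1$. Throughout I write $R_i=R\cap(V(Pr_{p_i})\setminus\{c\})$ for the part of a resolving set $R$ lying strictly inside block $i$, so the $R_i$ are pairwise disjoint and $|R|=\sum_i|R_i|+[c\in R]$, where $[c\in R]\in\{0,1\}$ records whether $c\in R$. The structural fact I would establish first is that, since $c$ is a cut vertex, $d(w,y)=d(w,c)+d(c,y)$ whenever $w$ lies in block $i$ and $y$ lies outside it; hence two vertices of block $i$ are separated by anything outside the block only through their distance to $c$. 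Consequently $R$ resolves block $i$ if and only if $R_i\cup\{c\}$ resolves $Pr_{p_i}$, which forces $|R_i|\ge dim(Pr_{p_i})-1$, i.e.\ $|R_i|\ge2$ for even blocks and $|R_i|\ge1$ for odd blocks; in particular every block carries at least one internal vertex. This already gives $|R|\ge\sum_i|R_i|\ge 2n$ when $n_o=0$.

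For $n_o\ge1$ and $n\ge3$ I would first reduce to $c\notin R$. If $c\in R$, then since every block has an internal vertex (if $R_j=\emptyset$ the reflection of the prism fixing $c$ leaves $u_2^j,u_{p_j}^j$ unresolved, as in the proofs of Observations \ref{vaPr>1}--\ref{vaPro}), deleting $c$ still leaves a separating vertex in some other block: for a same-block pair use any other block, and for a cross-block pair $a\in i$, $b\in j$ separated only by $c$ use a third block $k$, which exists because $n\ge3$. Thus $R\setminus\{c\}$ is again resolving and a minimum resolving set may be taken with $c\notin R$. With $c\notin R$, Observation \ref{vaPre} gives $|R_i|\ge2$ for each even block and Observation \ref{vaPro} gives $|R_i|+|R_j|\ge3$ for each pair of odd blocks; since at most one odd $|R_i|$ can equal $1$, we get $\sum_{i\ \mathrm{odd}}|R_i|\ge 2n_o-1$, and adding the even contribution yields $|R|\ge 2(n-n_o)+(2n_o-1)=2n-1$. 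The case $n=2$ (where no third block exists) is handled directly: if some block is even then $|R|\ge3$, and if both are odd then $|R|\ge3$ straight from Observation \ref{vaPro}.

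For the matching upper bound I would exhibit explicit sets. Using vertex-transitivity of the prism there is a basis of each $Pr_{p_i}$ containing $c=v_1^i$; taking that basis minus $c$ gives $R_i$ with $|R_i|=dim(Pr_{p_i})-1$. For even blocks, and for all but one odd block, I would instead choose an internal pair $R_i$ arranged so that no vertex of the block is "hidden behind $c$" from $R_i$, while the single remaining odd block keeps the lone size-$1$ set and is the only block permitted to rely on the distance-to-$c$ coordinate. Writing out the distance vectors, exactly as in the proofs of Theorems \ref{vKn} and \ref{eKn}, then verifies that $R=\bigcup_i R_i$ resolves $G$ and has size $2n$ (if $n_o=0$) or $2n-1$ (if $n_o\ge1$).

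I expect the main obstacle to be two intertwined points. On the lower-bound side it is the case $c\in R$, which the three observations do not settle directly and which genuinely fails for $n=2$; the remedy is the cut-vertex discard argument above, valid only once $n\ge3$. On the constructive side the delicate step is selecting the two internal vertices in each non-special block so that the "hidden behind $c$" vertices of distinct blocks never occur at a common distance to $c$: this cross-block disjointness is precisely what prevents coincidences between vertices of different blocks, and it is the part that requires an explicit, though routine, computation of the distance vectors in each prism.
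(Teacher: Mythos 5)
Your lower bound is sound and is in fact handled more carefully than in the paper: the cut-vertex observation that $R_i\cup\{c\}$ must resolve $Pr_{p_i}$ gives $|R_i|\ge \dim(Pr_{p_i})-1$ and settles $n_o=0$ outright, and your discard-$c$ reduction for $n\ge 3$ (with the separate $n=2$ check) is exactly what is needed to apply Observations \ref{vaPre} and \ref{vaPro} without double-counting the terminal vertex, a point the paper's proof passes over.

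The genuine gap is on the upper-bound side: you never exhibit the resolving set. You correctly isolate the obstruction --- a vertex $x$ of block $i$ with $d(x,w)=d(x,c)+d(c,w)$ for every $w\in R_i$ is ``hidden behind $c$'' and collides with any equally hidden vertex of another block at the same distance from $c$ --- but you then defer the choice of pairs to a computation you call routine. It is not routine, and it is precisely where the difficulty lies: with the natural choice $R_i=\{u_{p_i/2}^i,v_{p_i/2}^i\}$ in an even block one has $d(v_{p_i}^i,u_{p_i/2}^i)=1+d(c,u_{p_i/2}^i)$ and $d(v_{p_i}^i,v_{p_i/2}^i)=1+d(c,v_{p_i/2}^i)$, so $v_{p_i}^i$ is hidden at distance $1$ in \emph{every} even block and any two even blocks produce identical representations (for two copies of $Pr_4$ glued at $v_1$, $r(v_4^1|S)=r(v_4^2|S)=(3,2,3,2)$). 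So the existence of internal pairs whose hidden sets are empty, or at least pairwise non-clashing, is the actual content of the upper bound and must be proved, not asserted: one has to name explicit pairs (for instance $\{u_{p/2}^i,u_{p/2+2}^i\}$ has empty hidden set in an even prism) and then still verify that each such pair together with $c$ resolves its own block --- which not every internal pair does, since e.g.\ $\{v_1,u_2,u_3\}$ fails to resolve $Pr_4$ because $r(u_1)=r(v_2)$. Until such pairs are written down and both properties are checked for even and odd blocks (and for the one special odd block), your argument does not establish the upper bound, and hence not the theorem.
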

\begin{proof} For $n_o=0$, we have $dim(G) \geq 2n$ (by Observation \ref{vaPre}). Now, we define $S= \bigcup_{i=1}^{n} \{u_{\frac{p_i}{2}}^i,v_{\frac{p_i}{2}}^i\}$. It is clear that if $x,y$ are two distinct vertices in $Pr_{p_i-1}$ with $d(x,u_{\frac{p_i}{2}}^i)=d(y,u_{\frac{p_i}{2}}^i)$ and $d(x,v_{\frac{p_i}{2}}^i)=d(y,v_{\frac{p_i}{2}}^i)$ then $d(x,c)\neq d(y,c)$. This leads to $S$ being a resolving set and so $dim(G) = 2n = \sum_{i=1}^{n} dim(Pr_{p_i}) - n$.

For $n_o \geq 1$, by applying Observation \ref{vaPre}, we have each $Pr_{p_{i}}$ with even $p_{i}$ contains at least 2 vertices in a resolving set and, by applying Observation \ref{vaPro}, we have each $Pr_{p_{i}}$ with odd $p_{i}$ contains at least 2 vertices in a resolving set, except for exactly one which contains only 1 vertex. Therefore $dim(G) \geq 2(n-n_o) + 2n_o - 1 = 2n - 1$. For the upper bound, we denote by $p_{i_o}$ the minimum among the odd $p_i$s. Define $S=\bigcup_{i\neq i_o} \{u_{\lceil\frac{p_i}{2}\rceil}^i,v_{\lceil\frac{p_i}{2}\rceil}^i\} \bigcup \{v_{\lceil\frac{p_{i_o}}{2}\rceil}^{i_o}\}$. It is a routine exercise to show that $S$ is a resolving set and we obtain $dim(G) = 2n - 1 = \sum_{i=1}^{n} dim(Pr_{p_i}) - n + n_0 - 1$.
\end{proof}

Consider edge-amalgamation of $\{Pr_{p_1}, Pr_{p_2}, \ldots, Pr_{p_n}\}$ a finite collection of prisms. We denote by $u_1^i, \ldots, u_{p_i}^i, v_1^i, \ldots, v_{p_i}^i$ the vertices in the block ${Pr_{p_i}}$, $c_1c_2=v_1^iv_{p_i}^i$ the terminal edge, and ${Pr_{p_i-2}}$ the subgraph obtained by deleting $c_1c_2$ from the block ${Pr_{p_i}}$. The following observations are essential and can be proved similarly to those of vertex-amalgamation of prisms.

\begin{observation}
If $R$ is a resolving set of $Edge-Amal\{Pr_{p_i};v_1^iv_{p_i}^i\}$ then $|Pr_{p_i} \cap R| \geq 1$ for all $i$.
\label{eaPr>1}
\end{observation}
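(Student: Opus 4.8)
The plan is to mirror the proof of Observation \ref{vaPr>1}: assuming some block contributes no vertex to $R$, I will exhibit two distinct vertices of that block with identical metric representations. Suppose, for contradiction, that there is an index $j$ with $Pr_{p_j} \cap R = \emptyset$. The decisive structural fact is that the two terminal vertices $c_1 = v_1^j$ and $c_2 = v_{p_j}^j$ are the only vertices that the block $Pr_{p_j}$ shares with the remainder of $G$, so $\{c_1,c_2\}$ separates the interior of the block from everything outside. Consequently, for every interior vertex $w$ of $Pr_{p_j}$ and every $r \notin V(Pr_{p_j})$, each $w$--$r$ path must pass through $c_1$ or $c_2$, giving
\[ d(w,r) = \min\{\, d(w,c_1)+d(c_1,r),\; d(w,c_2)+d(c_2,r)\,\}. \]
Hence, since $R$ lies entirely outside the block, $r(w\mid R)$ is completely determined by the pair $\big(d(w,c_1),d(w,c_2)\big)$.

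First I would record the prism distances $d(v_a^j,v_b^j)=d_{C_{p_j}}(a,b)$ and $d(u_a^j,v_b^j)=1+d_{C_{p_j}}(a,b)$, where $d_{C_{p_j}}$ is distance in the cycle. It then suffices to produce two distinct interior vertices sharing the same pair of distances to $c_1=v_1^j$ and $c_2=v_{p_j}^j$. The natural candidates are $v_2^j$ and $u_1^j$: both lie at distance $1$ from $c_1$, while $d(v_2^j,c_2)=d_{C_{p_j}}(2,p_j)=2$ and $d(u_1^j,c_2)=1+d_{C_{p_j}}(1,p_j)=2$, so each carries the pair $(1,2)$. By the displayed formula this forces $r(v_2^j\mid R)=r(u_1^j\mid R)$, contradicting that $R$ resolves $G$, and we conclude $|Pr_{p_j}\cap R|\geq 1$ for every $j$.

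The main obstacle is the smallest prism, $p_j=3$: there $d_{C_3}(2,p_j)=d_{C_3}(2,3)=1\neq 2$, so the chosen pair collapses, and in fact a direct check shows that in the triangular prism the six vertices carry pairwise distinct distance pairs to $\{c_1,c_2\}$. Thus no block-internal collision is available and the argument above breaks down; one can even build an external resolving set that distinguishes all six vertices of a triangular block, so $p_j=3$ must be handled with care. I would therefore treat this boundary case separately, most cleanly under the standing assumption that each block is large enough for the argument to apply (i.e. $p_j\geq 4$), and I expect the genuine difficulty of the observation to reside entirely in this degenerate case.
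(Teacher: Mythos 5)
Your adaptation of Observation \ref{vaPr>1} is exactly the route the paper has in mind (it offers no explicit proof, only the remark that the edge-amalgamation observations ``can be proved similarly''), and for $p_j\geq 4$ your argument is complete: the pair $\{c_1,c_2\}$ separates the interior of block $j$ from the rest of $G$, so when $R$ misses the block every representation of an interior vertex is determined by its distance pair to $(c_1,c_2)$, and $v_2^j$ and $u_1^j$ both carry the pair $(1,2)$. The substance of my comments concerns the case $p_j=3$, which you flag but defer.

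That gap is real and, in fact, cannot be closed: the observation is false as stated when a block is a triangular prism. Take $G$ to be the edge-amalgamation of two copies of $Pr_3$ with terminal edge $c_1c_2=v_1v_3$, and let $R=\{v_2^2,u_1^2,u_2^2,u_3^2\}$ be the entire interior of the second block. Since every distance from block $1$ factors through $c_1$ or $c_2$, one computes $r(c_1|R)=(1,1,2,2)$, $r(c_2|R)=(1,2,2,1)$, $r(v_2^1|R)=(2,2,3,2)$, $r(u_1^1|R)=(2,2,3,3)$, $r(u_2^1|R)=(3,3,4,3)$, $r(u_3^1|R)=(2,3,3,2)$, $r(v_2^2|R)=(0,2,1,2)$, $r(u_1^2|R)=(2,0,1,1)$, $r(u_2^2|R)=(1,1,0,1)$, $r(u_3^2|R)=(2,1,1,0)$; all ten vectors are distinct, so $R$ resolves $G$ while $Pr_{p_1}\cap R=\emptyset$. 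This is precisely the phenomenon you predicted: for $p_j=3$ the six distance pairs to $(c_1,c_2)$ are pairwise distinct, so no block-internal collision is available and an external resolving set can exist. The honest conclusion is therefore not that the boundary case ``must be handled with care'' but that the observation needs the additional hypothesis $p_i\geq 4$ (or some other reformulation), a point the paper overlooks; since Observation \ref{eaPr>3}, which supplies the lower bound in Theorem \ref{ePrn}, is asserted on the same ``proved similarly'' basis, one should also check whether the $Pr_3$ degeneracy propagates there.
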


\begin{observation}
If $R$ is a resolving set of $Edge-Amal\{Pr_{p_i};v_1^iv_{p_i}^i\}$ then $|(Pr_{p_{i}} \cup Pr_{p_{j}}) \cap R| \geq 3$ for all distinct $p_{i}$ and $p_{j}$.
\label{eaPr>3}
\end{observation}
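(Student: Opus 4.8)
The plan is to mirror the reasoning used for the vertex-amalgamation observations (Observations \ref{vaPre} and \ref{vaPro}), with the terminal edge $c_1c_2$ playing the role that the single terminal vertex $c$ played there. Observation \ref{eaPr>1} already gives $|(Pr_{p_i}\cup Pr_{p_j})\cap R|\geq 2$, so it suffices to derive a contradiction from the assumption that this union meets $R$ in exactly two vertices. The engine of the argument is that $\{c_1,c_2\}$ is the unique cut separating any block from the rest of $G$: for $a\in Pr_{p_i}$ and $w\notin Pr_{p_i}$ one has $d(a,w)=\min\{d(a,c_1)+d(c_1,w),\,d(a,c_2)+d(c_2,w)\}$. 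Hence any two vertices having the same distance to $c_1$ and the same distance to $c_2$ receive identical coordinates from every landmark outside the blocks containing them, and so can be separated only by a landmark internal to one of those blocks.

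Using this, I would first catalogue the vertices whose separation is thereby forced. There are two families. The \emph{within-block} family: for $p_i\geq 4$ the vertices $u_1^i$ and $v_2^i$ have identical distances to both $c_1$ and $c_2$ (and symmetrically $u_{p_i}^i,v_{p_i-1}^i$ near $c_2$), so a block lacking an internal landmark cannot resolve them. The \emph{cross-block} family: vertices at matching near-terminal positions in different blocks, such as $u_1^i$ and $u_1^j$, also have identical distances to $c_1$ and to $c_2$; moreover the reflection swapping $c_1\leftrightarrow c_2$ (acting in each block by $x_k\mapsto x_{p_i+1-k}$) is an automorphism of $G$, so any landmark fixed by it fails to separate $c_1$ from $c_2$. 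Thus two landmarks would have to simultaneously place an internal landmark in each block, to kill both families, and avoid being jointly symmetric about the terminal edge, to separate $c_1$ from $c_2$ — demands that, with only two landmarks available, cannot be reconciled.

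Concretely, I would split according to the distribution of the two landmarks. If a terminal lies in $R$, or both landmarks lie in one block, then some block has no internal landmark, and a collision is produced either within that block or, for the small prism $Pr_3$ whose six vertices are individually separated by $\{c_1,c_2\}$, between a vertex and its cross-block twin. In the remaining principal case, one interior landmark $x_i\in Pr_{p_i}$ and one $x_j\in Pr_{p_j}$ with $c_1,c_2\notin R$, I would run a case analysis on the positions of $x_i$ and $x_j$, patterned on the three cases of Observation \ref{vaPro} (arc nearer $c_1$, arc nearer $c_2$, antipodal region) and split by the parity of $p_i$, showing that in every configuration some vertex of $Pr_{p_i}$ collides with a vertex of $Pr_{p_j}$, or $c_1$ collides with $c_2$. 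The main obstacle is precisely this last step: unlike the vertex-amalgamation setting, a single interior landmark already resolves a prism internally, so the surviving collision must be an inter-block one, and confirming that no placement of the pair $\{x_i,x_j\}$ defeats all such collisions (with $Pr_3$ and parity handled separately) is where the real work lies. Once established, the contradiction yields $|(Pr_{p_i}\cup Pr_{p_j})\cap R|\geq 3$, and since $i$ and $j$ are interchangeable this completes the proof.
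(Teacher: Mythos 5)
Your reduction is the right one, and it is essentially the only route available, since the paper itself gives no written proof of this observation (it only remarks that it ``can be proved similarly'' to Observations \ref{vaPr>1}--\ref{vaPro}). The cut structure you describe is correct: every path leaving a block passes through $c_1$ or $c_2$, so two vertices of $Pr_{p_i}\cup Pr_{p_j}$ with the same distance pair to $(c_1,c_2)$ can only be separated by a landmark interior to one of those two blocks, and this validly reduces the claim to a finite check on the placement of two landmarks. The genuine gap is that you never perform that check: the proposal stops exactly at the step you yourself identify as ``where the real work lies,'' namely exhibiting, for an arbitrary placement of one interior landmark in each block, a pair of vertices that remains unresolved. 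Moreover, the one concrete impossibility claim you do make --- that putting an interior landmark in each block while keeping the pair asymmetric about the terminal edge ``cannot be reconciled'' with only two landmarks --- is false. In the edge-amalgamation of two copies of $Pr_4$, the set $R=\{u_1^1,u_1^2\}$ meets both of your demands: each block contains an interior landmark (and $u_1$ separates the forced within-block pairs $(u_1,v_2)$ and $(u_4,v_3)$), and $r(c_1|R)=(1,1)\neq(2,2)=r(c_2|R)$. Yet $R$ is not resolving, because $v_2^1$, $v_2^2$ and $c_2$ all receive the representation $(2,2)$: the failure is a cross-block collision, not a violation of either demand.

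So the contradiction must come from the cross-block family you mention only in passing, and closing the argument requires the analogue of the three-case analysis in the proof of Observation \ref{vaPro}: for each admissible position of the interior landmark $x_i$ (classified by its distance pair to $(c_1,c_2)$, with the parity of $p_i$ and the block $Pr_3$ --- whose vertices are already separated by $\{c_1,c_2\}$ and hence admit no forced within-block pair --- treated separately), one must name a vertex of $Pr_{p_i}$ whose representation is forced to coincide with that of a vertex of $Pr_{p_j}$ or with $c_1$ or $c_2$, for every choice of $x_j$. The degenerate distributions (a terminal vertex in $R$, or both landmarks in one block) also need explicit collisions rather than the one-sentence dismissal you give them. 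Until that case analysis is written out, the observation is set up but not proved.
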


Now we are ready to prove the last theorem.

\begin{theorem} Let $\{Pr_{p_1}, Pr_{p_2}, \ldots, Pr_{p_n}\}$ be a collection of $n$ prisms with $n_o$ prisms of odd order. If $G$ is the edge-amalgamation of $Pr_{p_1}, \ldots, Pr_{p_n}$ then
\[dim(G)=\sum_{i=1}^{n} dim(Pr_{p_i}) - n + n_0 - 1.\]
\label{ePrn}
\end{theorem}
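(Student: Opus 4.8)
The plan is to prove matching lower and upper bounds, both equal to $2n-1$. I would first record the reduction that makes the target transparent: since $dim(Pr_p)=2$ for odd $p$ and $dim(Pr_p)=3$ for even $p$, the right-hand side $\sum_{i=1}^n dim(Pr_{p_i}) - n + n_o - 1$ collapses to $(3n-n_o)-n+n_o-1 = 2n-1$ regardless of $n_o$. So the statement is really $dim(G)=2n-1$, and everything reduces to one lower bound and one construction.

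For the lower bound I would argue block by block, exactly as in the proof of Theorem \ref{vPrn}. Write $A_i = R \cap (V(Pr_{p_i}) \setminus \{c_1,c_2\})$ for the private vertices of block $i$ in a resolving set $R$, and let $t = |R \cap \{c_1,c_2\}|$, so $|R| = \sum_i |A_i| + t$. The key structural fact is that each block is isometrically embedded, since $c_1c_2$ is an edge and so no shortest path between two vertices of a block ever profits from leaving it; consequently any two vertices $w,w'$ of block $i$ with $d(w,c_1)=d(w',c_1)$ and $d(w,c_2)=d(w',c_2)$ receive identical distances from every vertex outside block $i$, and such a pair can only be separated by a vertex of $A_i$. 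Feeding this into Observations \ref{eaPr>1} and \ref{eaPr>3} settles the case $t=0$ cleanly: there these two observations give $|A_i|\ge 1$ for every $i$ and $|A_i|+|A_j|\ge 3$ for all $i\neq j$, which forces all but one block to contribute at least two private vertices, whence $|R| \ge 2(n-1)+1 = 2n-1$.

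The main obstacle is the case $t\ge 1$, where one or both terminal vertices lie in $R$. Here Observation \ref{eaPr>3} only gives $|A_i|+|A_j| \ge 3-t$, which by itself is far too weak, so the terminal vertices seem to ``pay for'' coverage in several blocks simultaneously. To close this gap I would analyse the cross-block collisions directly: for each distance-pair value $\tau=(a,b)$ realised in several blocks, the vertices realising $\tau$ in distinct blocks are mutually indistinguishable from the outside, and a private separator sitting in block $k$ can separate block $k$'s copy from the rest but never separates two copies lying in blocks other than $k$. Turning this into a covering condition over all pairs of blocks is what should force enough private vertices even when terminals are present. Carrying this out carefully — and in particular verifying that a terminal vertex in $R$ cannot substitute for the private separators it appears to replace — is the delicate heart of the argument; I would expect small blocks such as $Pr_3$, for which the terminal edge already resolves the whole block, to be the cases demanding by far the most care.

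For the upper bound I would exhibit an explicit resolving set of size $2n-1$, directly paralleling the construction in Theorem \ref{vPrn}. Fix one block $i_0$ and set $S=\bigcup_{i\neq i_0}\{u^i_{\lceil p_i/2\rceil}, v^i_{\lceil p_i/2\rceil}\} \cup \{w\}$, where $w$ is a single vertex of block $i_0$ lying opposite the terminal edge; then $|S| = 2(n-1)+1 = 2n-1$. I would verify that $S$ resolves $G$ by the two checks used throughout the paper: within each block the two central vertices separate the local collision pairs, while across blocks the representations differ because any vertex of a foreign block is reached only through $c_1$ or $c_2$ and hence lies strictly farther from a block's own central vertices than that block's vertices do. As in Theorem \ref{vPrn} this verification is routine, so the genuine difficulty is concentrated entirely in the lower bound.
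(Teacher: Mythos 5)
Your reduction of the right-hand side to $2n-1$ is correct and matches what the paper implicitly uses, and your lower-bound argument in the case $t=0$ (no terminal vertex in $R$) is exactly the paper's: Observations \ref{eaPr>1} and \ref{eaPr>3} force all but one block to contribute two private vertices. The genuine gap is that the case $t\geq 1$ is not proved. You correctly diagnose that when $c_1$ or $c_2$ lies in $R$ the pairwise bound of Observation \ref{eaPr>3} degrades to $|A_i|+|A_j|\geq 3-t$, which only yields $|R|\geq n+t$ and is far short of $2n-1$; but you then only announce a strategy (``analyse the cross-block collisions directly \ldots\ carrying this out carefully is the delicate heart of the argument'') without executing it. A proof must actually exhibit, for each block, collision pairs such as $u_1^i$ and $v_2^i$ (which have identical distance vectors $(1,2)$ to $(c_1,c_2)$ and hence identical distances to every vertex outside block $i$, including $c_1$ and $c_2$ themselves) and show that at most $1+t$ blocks can survive with a single private separator. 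Until that counting is done, the theorem's lower bound is not established for $t\geq 1$. (To be fair, the paper's own one-line derivation ``by Observation \ref{eaPr>3}, $dim(G)\geq 2n-1$'' suffers from exactly the ambiguity you identify, since the terminal vertices are counted in every block; so you have located a real subtlety, but locating it is not the same as closing it.)

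A secondary weakness: your upper-bound set is asserted rather than verified, and it is not the paper's set. The paper takes $\{u^i_{p_i/2},v^i_{p_i/2}\}$ only in even blocks and $\{v^i_{(p_i+1)/2},u^i_1\}$ in odd blocks, whereas you take two ``central'' vertices $\{u^i_{\lceil p_i/2\rceil},v^i_{\lceil p_i/2\rceil}\}$ in every block $i\neq i_0$. Your justification for cross-block separation --- that vertices in different blocks differ because foreign vertices are reached through $c_1$ or $c_2$ --- is too coarse: since $d(x,z)=\min\{d(x,c_1)+d(c_1,z),\,d(x,c_2)+d(c_2,z)\}$ for $z$ outside the block of $x$, a foreign vertex $z$ with $d(z,c_1)=d(z,c_2)$ cannot distinguish two vertices whose distance pairs to $(c_1,c_2)$ are swapped, so the within-block choices must be checked against this finer criterion. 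Your chosen vertices may well work (they do for $Pr_5$), but that check is part of the proof and is missing.
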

\begin{proof} By Observation \ref{eaPr>3}, we have $dim(G) \geq 2n - 1$. Now, we denote by $p_{i_o}$ the minimum among the odd $p_i$s and define $$S = \bigcup_{{\rm even} \ p_i} \{u_{\frac{p_i}{2}}^i,v_{\frac{p_i}{2}}^i\} \bigcup_{{\rm odd} \ p_i, i\neq i_o} \{v_{\frac{p_i+1}{2}}^i,u_1^i\} \bigcup \{v_{\frac{p_{i_o}+1}{2}}^{i_o}\}.$$ It can be checked that $S$ is a resolving set and so $dim(G) = 2n - 1 = \sum_{i=1}^{n} dim(Pr_{p_i}) - n + n_0 - 1$.
\end{proof}

The afore-mentioned results for complete graphs and prisms arise to the following more general questions.

\begin{problem}
Let $\{G_1, G_2, \ldots, G_n\}$ be a finite collection of graphs and $v_{0_i}$ is a terminal vertex of $G_i$, $i=1,2,\ldots,n$. Determine $dim(Vertex-Amal\{G_i;v_{0_i}\})$ in terms of $dim(G_i)$s.
\end{problem}

\begin{problem}
Let $\{G_1, G_2, \ldots, G_n\}$ be a finite collection of graphs and $e_{0_i}$ is a terminal edge of $G_i$, $i=1,2,\ldots,n$. Determine $dim(Edge-Amal\{G_i;e_{0_i}\})$ in terms of $dim(G_i)$s.
\end{problem}

\end{document}